\def\pstAbscissa#1{%
  tx@EcldDict begin /N@#1 GetNode pop \pst@number\psxunit div end
}%
\def\pstOrdinate#1{%
  tx@EcldDict begin /N@#1 GetNode exch pop \pst@number\psyunit div end
}%
\def\pstShowCoor#1{
\begin@ClosedObj
  \addto@pscode{%
    tx@EcldDict begin /N@#1 GetNode exch \pst@number\psyunit div = \pst@number\psyunit div = end%
  }
\end@ClosedObj
}%
\def\pstMoveNode{\@ifnextchar[\Pst@MoveNode{\Pst@MoveNode[]}}
\def\Pst@MoveNode[#1]{%
  \begingroup
    \psset{#1}%
    \Pst@MoveNode@i}
\def\Pst@MoveNode@i(#1,#2)#3#4{%
  \pnode(! \pstAbscissa{#3} #1 add \pstOrdinate{#3} #2 add){#4}%
  \Pst@geonodelabel{#4}%
  \endgroup%
}%
\setlist{nosep}
\def\th@plain{\slshape}\makeatother
\makeatletter\patchcmd{\th@remark}{\itshape}{\slshape}{}{}\makeatother
\theoremstyle{plain}
\newtheorem{theorem}{Theorem}
\newtheorem{proposition}[theorem]{Proposition}
\theoremstyle{definition}
\newtheorem{definition}[theorem]{Definition}
\theoremstyle{remark}
\newtheorem{remark}[theorem]{Remark}
\newcommand\bA{\mathrm{A}}
\newcommand\bS{\mathrm{S}}
\newcommand\bH{\mathrm{H}}
\newcommand\bR{\mathrm{R}}
\newcommand\bK{\mathrm{K}}
\newcommand\bV{\mathrm{V}}
\newcommand\bValpha{\bV_{\!\alpha}}
\newcommand\bL{\mathrm{L}}
\newcommand\bW{\mathrm{W}}
\newcommand\rh{\mathrm{h}}
\newcommand\bRh{\bR^{\mspace{-2mu}\rh}}
\newcommand\bAtimes{\bA^{\mspace{-3mu}\times}}
\newcommand\bRtimes{\bR^{\mspace{-2mu}\times}}
\newcommand\bKtimes{\bK^{\mspace{-2mu}\times}}
\newcommand\bVtimes{\bV^{\mspace{-2mu}\times}}
\newcommand\bVf{\bV_{\!f}}
\newcommand\rH{\mathrm{H}}
\newcommand\fp[1][]{\mathfrak p_{#1}}
\newcommand\im[1][]{\mathfrak m_{#1}}
\newcommand\cre[1][]{\bK_{#1}}
\newcommand\gen[1]{\langle #1\rangle}
\newcommand\vg{\Gamma}
\newcommand\vgi{\vg_{\!\infty}}
\begin{document}

\title{\vspace*{-15mm}On a theorem by de Felipe and Teissier about\\the comparison of two henselisations\\in the non-noetherian case}
\author
{María Emilia Alonso García
\and Henri Lombardi
\and Stefan Neuwirth
}
\date{\vspace*{-7mm}}

\maketitle

\date{}

\begin{abstract}
  Let \(\bR\)~be a local domain, \(v\)~a valuation of its quotient field centred in~\(\bR\) at its maximal ideal. We investigate the relationship between~\(\bR^\rh\), the henselisation of~\(\bR\) as local ring, and \(\tilde{v}\), the henselisation of the valuation~\(v\), by focussing on the recent result by de Felipe and Teissier referred to in the title.  We give a new proof that simplifies the original one by using purely algebraic arguments. This proof is moreover constructive in the sense of Bishop and previous work of the authors, and allows us to obtain as a by-product a (slight) generalisation of the theorem by de Felipe and Teissier.

  Keywords: henselisation of a residually discrete local ring; henselisation of a valuated discrete field; minimal valuation.
  
  2020 Mathematics subject classification: 13B40, 13J15, 12J10, 14B25.
\end{abstract}

\section{Introduction}
\label{sec:introduction}

The theorem referred to in the title is the following, published in~\cite{felipeteissier20}.
\begin{theorem} \label{thFT}
Let \(\bR\) be a local domain and let \(\bR^\rh\) be its henselisation. If \(v\)~is a valuation centred in~\(\bR\), then:
\begin{enumerate}[(1)]
\item \label{thFT1} There exists a unique prime ideal~\(\bH(v)\) of~\(\bR^\rh\) lying over the zero ideal of~\(\bR\) such that \(v\)~extends to a valuation \(\tilde{v}\) centred in~\(\bR^\rh/\bH(v)\) through the inclusion \(\bR\subseteq\bR^\rh/\bH(v)\). In addition, the ideal \(\bH(v)\) is a minimal prime and the extension \(\tilde{v}\) is unique.
\item \label{thFT2} With the notation of~\ref{thFT1}, the valuations \(v\)~and~\(\tilde{v}\) have the same value group.
\end{enumerate}
\end{theorem}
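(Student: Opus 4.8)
The plan is to bridge $\bRh$ with the henselisation of the valued field $\bigl(\operatorname{Frac}(\bR),v\bigr)$ and then let the universal properties of both kinds of henselisation do the work. Write $\bK=\operatorname{Frac}(\bR)$ and let $\bV\subseteq\bK$ be the valuation ring of~$v$; as the centre of~$v$ in~$\bR$ is the maximal ideal, $\bR\subseteq\bV$ is a local homomorphism. Let $(\bK^{\rh},v^{\rh})$, with valuation ring~$\bV^{\rh}$, be the henselisation of $(\bK,v)$. I shall use three standard facts about it: $\bV^{\rh}$ is a henselian local ring; $(\bK^{\rh},v^{\rh})$ has the expected universal property among henselian valued fields receiving $(\bK,v)$; and $v^{\rh}$ has the same value group as~$v$, the henselisation of a valued field being immediate. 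All of this is available constructively for valuated discrete fields.

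\textit{Existence and part~\ref{thFT2}.} Composing $\bR\subseteq\bV\subseteq\bV^{\rh}$ gives a local homomorphism from~$\bR$ to the henselian local ring~$\bV^{\rh}$, which by the universal property of~$\bRh$ factors through a unique local $\bR$-homomorphism $\varphi\colon\bRh\to\bV^{\rh}$; set $\bH(v):=\ker\varphi$. This is a prime ideal, it contracts to~$(0)$ in~$\bR$ because $\bR\hookrightarrow\bV^{\rh}$ is injective, so $\bR\subseteq\bRh/\bH(v)$, and $\varphi$ induces a \emph{local} embedding $\bRh/\bH(v)\hookrightarrow\bV^{\rh}$. Restricting $v^{\rh}$ to $\operatorname{Frac}(\bRh/\bH(v))\subseteq\bK^{\rh}$ then yields a valuation~$\tilde v$ centred in~$\bRh/\bH(v)$ and extending~$v$. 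Part~\ref{thFT2} follows at once: $\bK\subseteq\operatorname{Frac}(\bRh/\bH(v))\subseteq\bK^{\rh}$, so the value group of~$\tilde v$ lies between those of~$v$ and~$v^{\rh}$, which coincide, and hence all three are equal.

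\textit{Minimality and uniqueness.} The ring $\bRh\otimes_{\bR}\bK=(\bR\setminus\{0\})^{-1}\bRh$ is a filtered colimit of étale $\bK$-algebras (base-changing to~$\bK$ the étale neighbourhoods of~$\bR$ that define~$\bRh$), hence von Neumann regular; since a prime of~$\bRh$ lying below one that contracts to~$(0)$ again contracts to~$(0)$, it follows that every prime of~$\bRh$ contracting to~$(0)$ is minimal, in particular $\bH(v)$. For uniqueness, suppose $\bH^{\prime}$ is a prime of~$\bRh$ over~$(0)$ and $v^{\prime}$ a valuation of $\bL^{\prime}:=\operatorname{Frac}(\bRh/\bH^{\prime})$ centred in~$\bRh/\bH^{\prime}$ with $v^{\prime}|_{\bK}=v$; let $\bW^{\prime}\subseteq\bL^{\prime}$ be its valuation ring, which need not be henselian. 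I would pass to the henselisation $(\bL^{\prime\rh},v^{\prime\rh})$ of the valued field $(\bL^{\prime},v^{\prime})$, whose valuation ring $\bW^{\prime\rh}$ \emph{is} a henselian local ring. Since $v^{\prime\rh}|_{\bK}=v$, the universal property of $(\bK^{\rh},v^{\rh})$ supplies a valuation-compatible $\bR$-embedding $\bV^{\rh}\hookrightarrow\bW^{\prime\rh}$, and then $\bRh\xrightarrow{\varphi}\bV^{\rh}\hookrightarrow\bW^{\prime\rh}$ and $\bRh\to\bRh/\bH^{\prime}\hookrightarrow\bW^{\prime}\hookrightarrow\bW^{\prime\rh}$ are two local $\bR$-homomorphisms into the henselian local ring~$\bW^{\prime\rh}$; by the universal property of~$\bRh$ they coincide, and comparing kernels forces $\bH^{\prime}=\ker\varphi=\bH(v)$. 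Reading this same coincidence on fraction fields, now with $\bH^{\prime}=\bH(v)$ (so that $\operatorname{Frac}(\bW^{\prime})=\operatorname{Frac}(\bRh/\bH(v))$), shows that $\tilde v$ and~$v^{\prime}$ are both the pullback of $v^{\prime\rh}$ along one and the same embedding into $\bL^{\prime\rh}$; hence $v^{\prime}=\tilde v$. This establishes the uniqueness of both $\bH(v)$ and~$\tilde v$.

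I expect the crux to be precisely this uniqueness argument --- in particular the device of replacing the possibly non-henselian valuation ring~$\bW^{\prime}$ by its henselisation, which manufactures a single henselian local ring into which the two universal properties can both be fed, and then the bookkeeping that the two resulting homomorphisms $\bRh\to\bW^{\prime\rh}$ really coincide. Everything else should be routine. Since $\bR$ is nowhere assumed noetherian and every ingredient --- the colimit description of~$\bRh$ and its universal property, the henselisation of a valuated discrete field and its universal property, von Neumann regularity of~$\bRh\otimes_{\bR}\bK$, and invariance of the value group under henselisation --- is available in Bishop-style constructive mathematics, the argument should stay constructive and simultaneously give the slight generalisation of the de Felipe--Teissier theorem announced in the abstract.
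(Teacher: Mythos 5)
Your argument is correct as a classical proof, but it takes a genuinely different route from the paper. The paper derives everything from a statement proved step by step during the simultaneous construction of the two henselisations: for each Nagata polynomial~\(f\) it compares \(\bR_f\) (Theorem~\ref{hensel-rdlr}) with \((\bK[\alpha],\bValpha)\) (Theorem~\ref{hensel-dvf}) and shows by an explicit computation --- characteristic polynomial of an element of \(\bR[x]\), minimality of \(\fp\) expressed via nilpotents --- that \(\ker\theta_f\) is a minimal prime (Theorem~\ref{th-step}); the colimit then gives that the kernel of \(\bRh\to\bV^\rH\) is a minimal prime, and item~\ref{thFT2} is read off from the immediacy of the valued-field henselisation. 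This stepwise formulation is precisely what allows \(\bR\) to be a non-domain (as \(\bR_f\) need not be a domain), giving the announced generalisation, and it stays elementary and constructive. You instead prove minimality in one shot at the top, via the ind-étale description of \(\bRh\) and von Neumann regularity of \((\bR\setminus\{0\})^{-1}\bRh\); and your uniqueness argument --- henselising the candidate \((\bL',v')\) so that both universal properties feed into the single henselian local ring \(\bW'^{\,\rh}\), then comparing the two factorisations of \(\bR\to\bW'^{\,\rh}\) --- is a clean piece that the paper does not spell out, since it reads item~\ref{thFT1} as a statement about the kernel of the canonical morphism. What each buys: your route is shorter and more conceptual and does prove the stated theorem; the paper's avoids the étale machinery, holds at every finite stage and for local rings with a minimal valuation, and is constructive as it stands, whereas your constructivity claim needs more care (the uniqueness step quantifies over arbitrary \(\bH'\) and \(v'\), and forming the henselisation of \((\bL',v')\) in this framework requires residual discreteness; minimality should be phrased as: the localisation of \(\bRh\) at \(\bH(v)\) is a localisation of a von Neumann regular ring, hence zero-dimensional).
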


This theorem compares in fact two henselisations: of the local domain~\(\bR\) and of the valuation ring defined by~\(v\). Item~\ref{thFT1} says that the canonical local morphism of the first henselisation into the second has a kernel that is a minimal prime ideal. Item~\ref{thFT2} is according to us a plain reminder that henselising a valuation ring leaves the value group unchanged.

The proof technique developed by the authors of~\cite{felipeteissier20} is quite sophisticated and uses topological methods, e.g.\ pseudo-convergent series in ad hoc completions. Yet all the concepts appearing in the theorem allow a basic algebraic definition. For instance, the notions of valuation domain, of henselisation of a local ring, and of henselisation of a valuation domain can be described in basic algebra. A purely algebraic proof of the theorem at stake is thus a priori conceivable. This is what is obtained in this article. An additional benefit is that our proofs are entirely constructive when the hypotheses are expressed with enough care. Our method proves in fact a little bit more, viz.
\begin{theorem}
  Let \(\bR\) be a local ring, \(\bV\) a valuation domain, and \(\phi\colon\bR\to\bV\) a local morphism whose kernel is a minimal prime ideal. Let \(\bR^\rh\) be the henselisation of~\(\bR\) as a local ring, and let \(\bV^\rH\) be the henselisation of~\(\bV\) as a valuation domain. The kernel of the canonical local morphism \(\psi\colon\bR^\rh\to\bV^\rH\) is a minimal prime ideal.
\end{theorem}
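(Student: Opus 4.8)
The plan is to reduce the statement to a general fact about henselisation that no longer mentions valuations: \emph{any prime ideal of $\bR^\rh$ whose contraction to $\bR$ is a minimal prime is itself a minimal prime of $\bR^\rh$}. First I would check that $\psi$ is well defined. Since $\bV^\rH$ is henselian and $\bV\hookrightarrow\bV^\rH$, the composite $\bR\xrightarrow{\phi}\bV\hookrightarrow\bV^\rH$ is a local morphism from $\bR$ into a henselian local ring, so it factors uniquely through $\bR^\rh$ by the universal property of the henselisation. As $\bV^\rH$ is a domain, $\mathfrak P:=\ker\psi$ is a prime ideal of $\bR^\rh$; and as $\bV\hookrightarrow\bV^\rH$ is injective, an element of $\bR$ maps to $0$ in $\bV^\rH$ precisely when it maps to $0$ in $\bV$, so the contraction of $\mathfrak P$ to $\bR$ equals $\ker\phi=:\mathfrak q$, which by hypothesis is a minimal prime of $\bR$. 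It is worth noticing that from here on the valuation structure of $\bV$ plays no role: all we use is that $\bV^\rH$ is a henselian domain extending $\bV$.

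For the general fact I would use the constructive description of $\bR^\rh$ as a filtered colimit $\bR^\rh=\varinjlim_{i}\bR_i$ in which each $\bR\to\bR_i$ is a local, essentially \'etale morphism (available from the authors' earlier work), and then localise everything at the multiplicative set $\bR\setminus\mathfrak q$: thus $(\bR^\rh)_{\mathfrak q}=\varinjlim_{i}(\bR_i\otimes_\bR\bR_{\mathfrak q})$. Since $\mathfrak q$ is minimal, $\bR_{\mathfrak q}$ is zero-dimensional, and its residue field $\kappa(\mathfrak q)=\operatorname{Frac}(\bR/\mathfrak q)$ is a genuine discrete field because $\bR/\mathfrak q$ embeds in the domain $\bV$; hence $\bR_i\otimes_\bR\kappa(\mathfrak q)$ is an \'etale algebra over a discrete field (up to localisation), so a finite product of finite separable field extensions of $\kappa(\mathfrak q)$, in particular zero-dimensional. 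Since $\bR_i\otimes_\bR\bR_{\mathfrak q}$ maps onto $\bR_i\otimes_\bR\kappa(\mathfrak q)$ with nilpotent kernel $\mathfrak q(\bR_i\otimes_\bR\bR_{\mathfrak q})$, it too is zero-dimensional (a ring that is zero-dimensional modulo a nil ideal is zero-dimensional), and zero-dimensionality is inherited by the filtered colimit — a witnessing identity $x^{n}=x^{n+1}y$ already holds at a finite stage — so $(\bR^\rh)_{\mathfrak q}$ is zero-dimensional. Now let $\mathfrak P$ be any prime of $\bR^\rh$ contracting to $\mathfrak q$; it is disjoint from $\bR\setminus\mathfrak q$ and hence corresponds to a prime of the zero-dimensional ring $(\bR^\rh)_{\mathfrak q}$, while any prime $\mathfrak P'\subseteq\mathfrak P$ contracts into $\mathfrak q$, hence onto $\mathfrak q$ by minimality of $\mathfrak q$, hence is likewise disjoint from $\bR\setminus\mathfrak q$ and corresponds to a prime of $(\bR^\rh)_{\mathfrak q}$ contained in the one coming from $\mathfrak P$. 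As all primes of a zero-dimensional ring are pairwise incomparable, $\mathfrak P'=\mathfrak P$, so $\mathfrak P$ is a minimal prime of $\bR^\rh$. Taking $\mathfrak P=\ker\psi$ concludes.

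To keep the last step constructive I would phrase it with explicit witnesses: given $x\in\ker\psi$, zero-dimensionality of $(\bR^\rh)_{\mathfrak q}$ produces $n\ge 1$, $y\in\bR^\rh$ and $s,s'\in\bR\setminus\mathfrak q$ with $s'x^{n}(s-xy)=0$ in $\bR^\rh$; then $\sigma:=s'(s-xy)$ satisfies $\sigma x^{n}=0$ while $\sigma\notin\ker\psi$, because $s's\notin\ker\psi$ and $s'xy\in\ker\psi$ — and this is exactly the constructive meaning of ``$\ker\psi$ is a minimal prime''. The ring-theoretic skeleton above is short, so the main obstacle is the constructive infrastructure it rests on: having the henselisation of the residually discrete local ring $\bR$ available as an explicit filtered colimit of essentially \'etale local algebras together with its universal property; the elementary but systematically used facts that a finite-dimensional algebra over a discrete field, and more generally a ring zero-dimensional modulo a nil ideal, is zero-dimensional; and carrying the positive, witness-bearing versions of ``minimal prime'' and ``zero-dimensional'' throughout — together with the small point, already invoked above, that $\kappa(\mathfrak q)$ is a discrete field so that the theory of \'etale algebras over a field applies.
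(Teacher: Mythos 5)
Your proposal is correct in substance, but it takes a genuinely different route from the paper. The paper never isolates your valuation-free transfer statement; it proves the result one Nagata step at a time (Theorem~\ref{th-step}): for \(\gamma=\pi_f(q(x))\in\bR_f\) it computes the characteristic polynomial \(g\) of \(q(x)\) in the free \(\bR\)-module \(\bR[x]\), splits its image as \(g_1(T)=T^kh_1(T)\) over \(\bK\), and uses the minimality of \(\ker\theta\) in the form ``\(\theta(a_j)=0\) yields \(b_j\) with \(b_ja_j\) nilpotent'' to manufacture an explicit \(\zeta\in\bS_f\) with \(\zeta\gamma\) nilpotent; the statement for \(\psi\colon\bR^\rh\to\bV^\rH\) then follows by passing to the filtered colimit. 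You instead prove the general fact that any prime of \(\bR^\rh\) lying over a minimal prime \(\mathfrak q\) of \(\bR\) is minimal, by base-changing the colimit to the zero-dimensional ring \(\bR_{\mathfrak q}\); note that both arguments use the valuation only to know that the kernel is a detachable prime contracting to the given minimal prime, and your fibre-level zero-dimensionality is essentially the paper's characteristic-polynomial trick performed after localisation rather than over \(\bR\) itself. What your route buys is a cleaner, more conceptual statement proved directly at the colimit; what it costs is infrastructure, and there are three points you should tighten. First, ``each \(\bR\to\bR_i\) is essentially \'etale'' is delicate in the paper's constructive setting: the stages of the colimit are \emph{iterated} Nagata extensions, and collapsing them to a single (essentially finite or \'etale) extension of \(\bR\) is exactly what Remark~\ref{rem-single} flags as involved; your argument survives without this, since zero-dimensionality of \(\bR_i\otimes_\bR\bR_{\mathfrak q}\) propagates step by step (a localisation of a finite algebra over a zero-dimensional ring is zero-dimensional), and for that you need neither \'etaleness nor separability — the reduction of \(f\) modulo \(\mathfrak q\) need not be separable, although inverting \(f'(x)\) does make each stage essentially \'etale. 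Second, the kernel of \(\bR_i\otimes_\bR\bR_{\mathfrak q}\to\bR_i\otimes_\bR\kappa(\mathfrak q)\) is a nil ideal, not necessarily a nilpotent one, which is all your quoted lemma needs anyway. Third, your final witness argument only treats \(x\in\ker\psi\), so you must say explicitly that \(\ker\psi\) is detachable (because \(\bV^\rH\) sits inside the discrete field \(\bK^\rH\)), so that every element of \(\bR^\rh\) becomes invertible or nilpotent in the localisation at \(\ker\psi\) — which is the paper's definition of a minimal prime. Finally, the paper's stepwise formulation is what lets it accompany the simultaneous construction of the two henselisations (where intermediate rings need not be domains); your colimit-level argument recovers the theorem as stated but not that refinement, though the same localisation argument applied to a single stage would give Theorem~\ref{th-step} as well.
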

In fact, this theorem holds even true at every step of the simultaneous construction of the henselisations, as shown in Theorem~\ref{th-step}. Note that henselising a local domain does not in general yield a domain: this explains why we had to prove a little bit more for proceeding step by step; see Remarks~\ref{rem-reduit} and~\ref{rem-single} for two variants.

\section{Setting}
\label{sec:setting}

By tuning our definitions as follows, we are able to make the whole article constructive, in agreement with the stance of \cite{Bi67,MRR}.

A \textsl{discrete field} is a nontrivial commutative ring in which every element is zero or invertible.
A \textsl{local ring} is a nontrivial commutative ring with unit such that \[x+y\in\bRtimes\implies{x\in\bRtimes}\text{ or }{y\in\bRtimes}\text.\]
A ring morphism \(\phi\colon\bR\to\bA\) between local rings is \textsl{local} if it reflects units, i.e.\ if \[\phi(x)\in\bAtimes\implies x\in\bRtimes\text.\]
The \textsl{Jacobson radical} of a commutative ring~\(\bR\) is \(\operatorname{Rad}(\bR)=\{x:1+x\bR\subseteq\bRtimes\}\). When the ring is local, \(\operatorname{Rad}(\bR)\) is a maximal prime ideal that we shall denote as usual by \(\im[\bR]\);
the field \(\cre[\bR]=\bR/\im[\bR]\) is the \textsl{residue field} of~\(\bR\); \(\bR\) is \textsl{residually discrete} if \(\cre[\bR]\) is a discrete field.

A subring~\(\bV\) of a discrete field~\(\bK\) is a \textsl{valuation ring} for~\(\bK\) if \[x\in\bKtimes\implies {x\in\bV}\text{ or }x^{-1}\in\bV\text;\]
in particular, \(\bV\) is a local ring. A valuated discrete field is a pair~\((\bK,\bV)\) with \(\bK\)~a discrete field and \(\bV\) a valuation ring for~\(\bK\). Its \textsl{value group} is the linearly ordered group~\(\vg=\bKtimes/\bVtimes\), and its \textsl{valuation} \(v\colon\bK\to\vgi=\vg\cup\{\infty\}\) is defined as the extension of the canonical morphism \(\bKtimes\to\vg\) by setting \(v(0)=\infty\). If \(\bV\) is residually discrete, then \(\vg\)~is discrete. 

A ring is \textsl{zero-dimensional} if for each element~\(x\) there is an element~\(y\) and an integer~\(N\ge0\) such that \(x^N(1+xy)=0\). A prime ideal~\(\fp\) in a ring~\(\bR\) is \textsl{minimal} if the localisation of~\(\bR\) at~\(\fp\) is zero-dimensional.

\section{The henselisation of a residually discrete local ring}
\label{sec:hens-resid-discr}

\begin{definition}
  Let \((\bR,\im[\bR])\) be a local ring with residue field~\(\cre[\bR]=\bR/\im[\bR]\).
  \begin{itemize}
  \item An \textsl{henselian zero above}~\(x\in\bR\) for a polynomial $f\in\bR[X]$ subject to the conditions \(\overline{f(x)}=0\) in~\(\cre[\bR]\) and \(\overline{f'(x)}\in(\cre[\bR])^\times\) is an $\alpha\in x+\im[\bR]$ such that \(f(\alpha)=0\). An \textsl{henselian zero} is an henselian zero above~\(0\).
  \item\sloppy A \textsl{Nagata polynomial} in~\(\bR[X]\) is a monic
    polynomial \(f(X)=X^n+a_{n-1}X^{n-1}+\dots+a_1X+a_0\) with
    \(\overline{a_1}\in(\cre[\bR])^\times\) and \(\overline{a_0}=0\) in~\(\cre[\bR]\).
  \item A \textsl{special polynomial} in~\(\bR[X]\) is a monic
    polynomial \(t(X)=X^n-X^{n-1}+t_{n-2}X^{n-2}+\dots+t_0\) with the~\(t_i\) in~\(\im[\bR]\).
  \item \(\bR\) is \textsl{henselian} if every Nagata polynomial has an henselian zero.
  \item An \textsl{henselisation of}~\(\bR\) is an henselian local ring \((\bRh,\im[\bRh])\) together with a local morphism \(\phi^\rh\colon\bR\to\bRh\) that factorises in a unique way every local morphism of~\(\bR\) to an henselian local ring.
  \end{itemize}
\end{definition}
\begin{proposition}
  There is at most one henselian zero above a given element for a given polynomial. A special polynomial has at most one zero~\(\beta\) with \(\overline{\beta}=1\) in~\(\cre[\bR]\), called its \textup{special zero}; if the special polynomial has other zeros in \(\bR\), they are in~\(\im[\bR]\).
\end{proposition}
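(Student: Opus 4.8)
The plan is to prove the three assertions in turn, leaning on two elementary facts: (i) for every polynomial \(f\in\bR[X]\) there is an \(h\in\bR[X,Y]\) with \(f(Y)-f(X)=(Y-X)\,h(X,Y)\) and \(h(X,X)=f'(X)\) (valid over an arbitrary commutative ring; e.g.\ \(h(X,Y)=\sum_{i\ge1}a_i\sum_{j+k=i-1}X^jY^k\) when \(f=\sum_ia_iX^i\)); and (ii) since \(\bR\) is local, an element of~\(\bR\) is invertible as soon as its residue in~\(\cre[\bR]\) is, because \(1+\operatorname{Rad}(\bR)\subseteq\bRtimes\).

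For the uniqueness of an henselian zero above \(x\in\bR\) for a polynomial~\(f\) with \(\overline{f(x)}=0\) and \(\overline{f'(x)}\in(\cre[\bR])^\times\), I would suppose \(\alpha,\beta\in x+\im[\bR]\) both annihilate~\(f\), set \(\delta=\beta-\alpha\in\im[\bR]\), and substitute \(X=\alpha\), \(Y=\beta\) into identity~(i) to obtain \(0=\delta\,h(\alpha,\beta)\). As \(\alpha\equiv\beta\equiv x\pmod{\im[\bR]}\), one has \(h(\alpha,\beta)\equiv h(x,x)=f'(x)\pmod{\im[\bR]}\), so \(h(\alpha,\beta)\) has invertible residue and is therefore a unit by~(ii); hence \(\delta=0\), i.e.\ \(\alpha=\beta\).

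For the special zero, I would check that a special polynomial \(t(X)=X^n-X^{n-1}+t_{n-2}X^{n-2}+\dots+t_0\) with the~\(t_i\) in~\(\im[\bR]\) satisfies \(\overline{t(1)}=\overline1-\overline1=0\) and, since \(i\,t_i\in\im[\bR]\) for each~\(i\), also \(\overline{t'(1)}=\overline n-\overline{n-1}=\overline1\in(\cre[\bR])^\times\). Thus a zero~\(\beta\) of~\(t\) with \(\overline\beta=1\) is exactly an henselian zero above~\(1\) for~\(t\), and its uniqueness follows from the previous paragraph applied with \(x=1\). For the last assertion, take any zero \(\gamma\in\bR\) of~\(t\): from \(t(\gamma)=0\) we get \(\gamma^{n-1}(\gamma-1)=-(t_{n-2}\gamma^{n-2}+\dots+t_0)\in\im[\bR]\), whence \(\overline{\gamma}^{\,n-1}(\overline{\gamma}-1)=0\) in~\(\cre[\bR]\). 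Since \(\cre[\bR]\) is a discrete field, either \(\overline\gamma=0\), so that \(\gamma\in\im[\bR]\), or \(\overline\gamma\) is invertible, which forces \(\overline{\gamma}^{\,n-1}\) invertible and hence \(\overline\gamma=1\), so that \(\gamma\) is the special zero; thus any zero of~\(t\) in~\(\bR\) other than the special zero lies in~\(\im[\bR]\).

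I do not expect a genuine obstacle here: the whole argument is a short computation together with the local-ring fact~(ii). The only points deserving attention are the validity of identity~(i) over a ring that may have nilpotents or zero-divisors, and — for the final assertion — the case split on~\(\overline\gamma\), which is available because \(\cre[\bR]\) is discrete in the setting of this section.
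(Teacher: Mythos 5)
The paper states this proposition without a proof, so the only question is whether your argument stands on its own. Your first two assertions do: the identity \(f(Y)-f(X)=(Y-X)h(X,Y)\) with \(h(X,X)=f'(X)\), together with the fact that in a local ring an element whose residue is invertible is itself invertible, is exactly the right (and fully constructive) argument, and your verification that a special polynomial satisfies \(\overline{t(1)}=0\) and \(\overline{t'(1)}=\overline 1\) correctly reduces the special zero to an henselian zero above~\(1\).

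The weak point is the last assertion. The proposition is stated for an arbitrary local ring: residual discreteness of \(\bR\) only enters from Theorem~\ref{hensel-rdlr} onwards, so the dichotomy ``\(\overline\gamma=0\) or \(\overline\gamma\) invertible'' that you invoke is precisely what is \emph{not} available in the stated generality --- it is equivalent to \(\cre[\bR]\) being discrete. The case split you need is instead furnished by the constructive definition of a local ring: since \(\gamma+(1-\gamma)=1\in\bRtimes\), either \(\gamma\in\bRtimes\) or \(1-\gamma\in\bRtimes\). In the first case your relation \(\gamma^{n-1}(\gamma-1)\in\im[\bR]\) gives \(\gamma-1=\gamma^{-(n-1)}\cdot\gamma^{n-1}(\gamma-1)\in\im[\bR]\), so \(\overline\gamma=1\) and \(\gamma\) is the special zero; in the second case \(\gamma^{n-1}=(\gamma-1)^{-1}\cdot\gamma^{n-1}(\gamma-1)\in\im[\bR]\), and then \(\gamma\in\im[\bR]\) because the Jacobson radical is a radical ideal: for every \(y\in\bR\) one has \((1+\gamma y)\sum_{i=0}^{n-2}(-\gamma y)^i=1-(-\gamma y)^{n-1}\in\bRtimes\), hence \(1+\gamma y\in\bRtimes\). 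With this substitution your proof works verbatim for every local ring; as written, it is correct only under the extra hypothesis that \(\bR\) is residually discrete, which happens to cover all uses of the proposition in the paper but is not part of its statement.
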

When \(\bR\) is residually discrete, then the henselisation of~\(\bR\) exists and may be constructed as filtered colimit of successive extensions of~\(\bR\) by henselian zeros of Nagata polynomials. This results from the following theorem.
\begin{theorem}[{\cite[Lemmas~6.2 and~6.3]{ALP08}}]\label{hensel-rdlr}
  Let \((\bR,\im[\bR])\) be a residually discrete local ring and \(f\) a Nagata polynomial in~\(\bR[X]\). Let \(\bR_f\) be the ring defined as the localisation of \(\bR[x]=\bR[X]/\gen f\) at the monoid
  \[\bS=\{\,g(x)\in\bR[x]:g\in\bR[X]\text{ and }\overline{g(0)}\in(\cre[\bR])^\times\,\}\text.\]
  \begin{enumerate}
  \item \(\bR_f\) is a residually discrete local ring with maximal ideal~\(\im[\bR]\bR_f\) and residue field canonically isomorphic to~\(\cre[\bR]\). The image of~\(x\) in~\(\bR_f\) is the henselian zero for the image of~\(f\) in~\(\bR_f[X]\). 
  \item \(\bR_f\) is faithfully  flat over~\(\bR\), so that \(\bR\)~may be identified with its image in~\(\bR_f\).
  \item\label{point3} The inclusion \(\bR\to\bR_f\) factorises in a unique way every local morphism of~\(\bR\) to a residually discrete local ring for which the image of~\(f\) has an henselian zero.
  \end{enumerate}
\end{theorem}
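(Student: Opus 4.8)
The plan is to establish the three claims in turn, the only non-formal step being the computation of the residue ring of~\(\bR_f\). Since \(f\)~is monic of degree \(n\ge1\), the ring \(\bR[x]=\bR[X]/\gen f\) is free of rank~\(n\) over~\(\bR\), and \(\bS\)~is a multiplicative monoid because \(\overline{(gh)(0)}=\overline{g(0)}\,\overline{h(0)}\) in~\(\cre[\bR]\). Hence \(\bR_f=\bS^{-1}\bR[x]\) is well defined and, as a localisation of a free \(\bR\)-module, flat over~\(\bR\).

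For the first assertion the crux is to identify \(\bR_f/\im[\bR]\bR_f\). Reducing \(f\) modulo~\(\im[\bR]\) yields \(\overline f=X^n+\overline{a_{n-1}}X^{n-1}+\dots+\overline{a_1}X=X\,h(X)\) with \(h(0)=\overline{a_1}\in(\cre[\bR])^\times\), so over the \emph{discrete} field~\(\cre[\bR]\) the polynomials \(X\) and~\(h\) are comaximal, and the Chinese remainder theorem gives \(\bR[x]/\im[\bR]\bR[x]=\cre[\bR][X]/\gen{\overline f}\cong\cre[\bR]\times\cre[\bR][X]/\gen h\), a class \(g(x)\) corresponding to \((\overline{g(0)},g\bmod h)\). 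Thus the image of~\(\bS\) consists of pairs whose first coordinate is invertible; it contains the idempotent~\((1,0)\) (take \(g=h/h(0)\)), which annihilates the second factor, so localising at~\(\bS\) kills \(\cre[\bR][X]/\gen h\) and inverts only nonzero elements of the field~\(\cre[\bR]\). Hence \(\bR_f/\im[\bR]\bR_f\cong\cre[\bR]\), through the quotient map \(g(x)/s\mapsto\overline{g(0)}/\overline{s(0)}\) of kernel~\(\im[\bR]\bR_f\). Now if \(\xi=g(x)/s\notin\im[\bR]\bR_f\), then \(\overline{g(0)}\ne0\), so \(g(x)\in\bS\) is invertible in~\(\bR_f\) and hence so is~\(\xi\); therefore \(\bR_f\) is local with maximal ideal \(\im[\bR]\bR_f\) and residue field canonically~\(\cre[\bR]\), hence residually discrete. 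Finally \(f(x)=0\) while \(\overline x=0\) and \(\overline{f'(x)}=\overline{a_1}\in(\cre[\bR])^\times\), so \(x\)~is the henselian zero above~\(0\) of the image of~\(f\), unique by the preceding Proposition.

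For the remaining two assertions: since \(\bR\)~is local and \(\im[\bR]\bR_f\ne\bR_f\), flatness upgrades to faithful flatness, whence \(\bR\hookrightarrow\bR_f\). For the universal property, let \(\psi\colon\bR\to\bB\) be a local morphism to a residually discrete local ring such that \(\psi(f)\) admits a henselian zero \(\beta\in\im[\bB]\). The assignment \(x\mapsto\beta\), \(a\mapsto\psi(a)\), defines a morphism \(\bR[x]\to\bB\); for \(g\in\bR[X]\) with \(\overline{g(0)}\ne0\) one has \(g(0)\in\bRtimes\), so \((\psi g)(\beta)\equiv\psi(g(0))\pmod{\im[\bB]}\) is a unit, whence \(\bS\) lands in~\(\bB^{\times}\) and the morphism factors through some \(\theta\colon\bR_f\to\bB\). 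The same congruence read backwards shows \(\theta\)~is local, and it is the only local morphism extending~\(\psi\): any such must carry the non-unit~\(x\) into~\(\im[\bB]\) to a root of~\(\psi(f)\), and such a root is unique by the preceding Proposition, while \(\bR_f\) is generated by~\(\bR\), by~\(x\), and by the inverses of~\(\bS\).

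The single substantive step is thus the identification \(\bR_f/\im[\bR]\bR_f\cong\cre[\bR]\); locality, faithful flatness and the universal property follow formally from it. The care it demands is constructive bookkeeping over the \emph{discrete} field~\(\cre[\bR]\): carrying out the factorisation \(\overline f=Xh\), checking that \(X\) and~\(h\) are comaximal so that the Chinese remainder decomposition is effective, and observing that localising a discrete field at a set of nonzero elements returns the field itself.
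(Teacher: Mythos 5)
Your proof is correct, and since the paper itself gives no proof of this theorem (it is quoted from \cite[Lemmas~6.2 and~6.3]{ALP08}), there is nothing internal to compare it against; your route — splitting \(\overline f=X\overline h\) with \(\overline h(0)=\overline{a_1}\) invertible over the discrete field \(\cre[\bR]\), using the effective Chinese remainder decomposition to see that localising at \(\bS\) kills the factor \(\cre[\bR][X]/\gen{\overline h}\), and then deducing locality, the henselian zero, faithful flatness and the universal property formally — is essentially the elementary argument of the cited lemmas. Two small points of constructive hygiene: the case distinction ``\(\xi\in\im[\bR]\bR_f\) or \(\xi\) invertible'' is legitimate because membership in \(\im[\bR]\bR_f\) is decided by the residue in the discrete field \(\cre[\bR]\), and faithful flatness is best justified by ``flat plus local morphism of local rings'' (the map \(\bR\to\bR_f\) reflects units since the residue field is unchanged) rather than by the classical criterion quantifying over maximal ideals.
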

The universal property in Item~\ref{point3} is exactly what is needed to construct the henselisation as filtered colimit of the \(\bR_f\)'s: compare how \cite[Lemma~6.3]{ALP08} is used in \cite[Section~6.1.2]{ALP08}. Note that the image of~\(f\) in the residue field~\(\cre[\bR]\) has the henselian zero~\(0\): as Theorem~\ref{hensel-rdlr} aims in particular for a factorisation \(\bR\to\bR_f\to\cre[\bR]\) of local morphisms, the heuristics are that we must modify~\(\bR[X]/\gen f\) by making the elements of~\(\bS\) invertible; on the other hand, \(\bS\)~is maximal.

Note that the morphism \(\pi_f\colon\bR[x]\to\bR_f\) is in general not injective.
\section{The henselisation of a valuated discrete field}
\label{sec:hens-discr-valu}

\begin{definition}
  Let \((\bK,\bV)\) be a valuated discrete field such that \(\bV\)~is residually discrete.
  \begin{itemize}
  \item An \textsl{extension} of~\((\bK,\bV)\) is a valuated discrete field~\((\bL,\bW)\) with \(\bW\)~residually discrete together with a morphism~\(\phi\colon\bK\to\bL\) such that \(\bV=\bK\cap\phi^{-1}(\bW)\).
  \item \((\bK,\bV)\) is \textsl{henselian} if \(\bV\)~is henselian as a local ring.
  \item An \textsl{henselisation of}~\((\bK,\bV)\) is an henselian valuated discrete field \((\bK^\rH,\bV^\rH)\) together with a morphism \(\phi^\rH\colon\bK\to\bK^\rH\) that factorises in a unique way every extension of~\((\bK,\bV)\) to an henselian valuated discrete field.
  \end{itemize}
\end{definition}
The henselisation of~\((\bK,\bV)\) may be constructed as filtered colimit of successive extensions of~\((\bK,\bV)\) by henselian zeros of Nagata polynomials. This results from the following theorem, which is a version of the results of~\cite{KL00}.
\begin{theorem}\label{hensel-dvf}
  Let \((\bK,\bV)\) be a valuated discrete field such that \(\bV\)~is residually discrete and let \(f=X^n+a_{n-1}X^{n-1}+\dots+a_0\)~be a Nagata polynomial in~\(\bV[X]\). There is an extension~\((\bK[\alpha],\bValpha)\) of~\((\bK,\bV)\) for which the image of~\(f\) in~\(\bValpha[X]\) has an henselian zero~\(\alpha\) and such that the extension map \(\bK\to\bK[\alpha]\) factorises in a unique way every extension of~\((\bK,\bV)\) to~\((\bL,\bW)\) such that the image of~\(f\) in \(\bW[X]\) has an henselian zero. Furthermore, the residue field and the value group of~\((\bK[\alpha],\bValpha)\) are canonically isomorphic to the residue field and the value group of~\((\bK,\bV)\), respectively.
\end{theorem}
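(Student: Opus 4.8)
The plan is to dispose of a degenerate case, to build $(\bK[\alpha],\bValpha)$ by applying Theorem~\ref{hensel-rdlr} to the local ring~$\bV$ and then passing to fractions, and finally to read off from the Newton polygon of~$f$ that one has landed in a valuated discrete field with unchanged residue field and value group. Since $\bK$~is discrete, split on~$a_0$: if $a_0=0$, then $\alpha=0$ is a henselian zero of~$f$ already in~$\bV$, so $(\bK,\bV)$ itself answers the question and every claim is vacuous. Otherwise $a_0\in\bKtimes$, whence $v(a_0)>0$ (as $\overline{a_0}=0$) and $v(a_1)=0$ (as $\overline{a_1}\in\cre[\bV]^\times$), while $v(a_i)\ge0$ for all~$i$ and $a_n=1$. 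Apply Theorem~\ref{hensel-rdlr} with $\bR=\bV$: it produces the residually discrete local ring $\bVf=\bS^{-1}(\bV[X]/\gen f)$, faithfully flat over~$\bV$, with maximal ideal~$\im[\bV]\bVf$ and residue field canonically~$\cre[\bV]$, in which the image~$\alpha$ of~$X$ is the henselian zero of~$f$. Set $\bK[\alpha]:=\bK\otimes_\bV\bVf$, the localisation of~$\bVf$ obtained by inverting~$\bV\setminus\{0\}$, and let $\bValpha$ be the image of~$\bVf$ inside it. Then $\bK\to\bK[\alpha]$ is injective ($\bK$~being flat over~$\bV$ and $\bV\hookrightarrow\bVf$); $\bValpha$~is local with residue field~$\cre[\bV]$, hence residually discrete, since $\bVf\to\bValpha$ is a surjective local morphism; and $\alpha\in\im[\bValpha]$ satisfies $f(\alpha)=0$ and $\overline{f'(\alpha)}=\overline{a_1}\in\cre[\bV]^\times$, so that, once $\bValpha$ is known to be a valuation ring, $\alpha$~is a henselian zero of~$f$ in~$(\bK[\alpha],\bValpha)$.

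The crux is thus that \emph{$\bK[\alpha]$ is a discrete field, $\bValpha$ a valuation ring of~$\bK[\alpha]$, and $\bV=\bK\cap\bValpha$.} Here the Nagata hypothesis is genuinely used, through the Newton polygon of~$f$ with respect to~$v$: the data $v(a_0)>0$, $v(a_1)=0$, $v(a_i)\ge0$ (with $a_n=1$) show that, in any valuated extension where $f$~splits, $f$~has exactly one root of positive value — necessarily of value~$v(a_0)$, and a simple root, since the first polygon segment has horizontal length~$1$ — the other $n-1$ roots being integral over~$\bV$, hence of value~$\ge0$, hence (there being room for only one root of positive value) of value~$0$. Classically this gives a coprime factorisation $f=pq$ over~$\bK$ with $p$~the minimal polynomial of that root; one checks that in $\bK[X]/\gen f\cong\bK[X]/\gen p\times\bK[X]/\gen q$ the monoid~$\bS$ contains~$q(x)$, which annihilates the second factor, and acts by units on the field~$\bK[X]/\gen p$ (no member of~$\bS$ is a multiple of~$p$, since $p(0)$ has value~$v(a_0)>0$). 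Hence $\bK[\alpha]\cong\bK[X]/\gen p$ is a discrete field and $\bValpha$ is $\bV[\alpha]$ localised at~$\bS$, which one identifies with the unique valuation ring of~$\bK[\alpha]$ extending~$\bV$ and containing~$\alpha$ in its maximal ideal. The invariance of residue field and value group then follows — essentially because the factor of~$p$ cutting out~$\bValpha$ over a henselian extension has degree one, so that this extension of valuations has trivial ramification and residue degree; equivalently $v(\alpha)=v(a_0)$ already lies in~$\vg$, and, the residue field not having grown, the value group cannot grow either.

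The \emph{main obstacle} is to carry out this last paragraph entirely \emph{constructively}: $f$~cannot in general be factored over an arbitrary discrete field, so the coprime decomposition, the field structure of~$\bK[\alpha]$, and the valuation ring~$\bValpha$ must all be obtained without it, by arguing directly with the ideals of~$\bV[X]/\gen f$ and with the valuation, and using that $\alpha$~is a simple residual root so that $f'(\alpha)\in\bValpha^\times$; this is the technical heart of the matter, handled along the lines of~\cite{KL00}, and it is also where the equality of value groups is genuinely delicate.

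Granting all this, the universal property is formal. Let $(\bL,\bW)$ be an extension of~$(\bK,\bV)$, with $\bW$~residually discrete, in which the image of~$f$ has a henselian zero~$\beta$. Item~\ref{point3} of Theorem~\ref{hensel-rdlr}, applied to the local morphism $\bV\to\bW$, gives a unique local morphism $\bVf\to\bW$ sending~$\alpha$ to~$\beta$; inverting~$\bV\setminus\{0\}$ yields $\bK[\alpha]=\bK\otimes_\bV\bVf\to\bK\otimes_\bV\bW\to\bL$, a morphism over~$\bK$. One checks that it is a morphism of extensions — i.e.\ that $\bValpha$ is the trace of~$\bW$ on~$\bK[\alpha]$, using $\beta\in\im[\bW]$ and the reflection of units — and that it is the only one: for $\bK[\alpha]$~is generated over~$\bK$ by~$\alpha$, whose image is forced to be~$\beta$ by uniqueness of henselian zeros, while $\bVf\to\bW$ is itself unique by Theorem~\ref{hensel-rdlr}.
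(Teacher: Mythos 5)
Your outer scaffolding (reduction to Theorem~\ref{hensel-rdlr} applied to \(\bR=\bV\), the Newton-polygon picture of the single simple root of positive value, and the formal derivation of the universal property from Item~\ref{point3}) is sound, but the proposal has a genuine gap exactly at what you yourself call the technical heart, and that heart is the actual content of the theorem. The claims that \(\bK[\alpha]\) is a discrete field, that \(\bValpha\) is a valuation ring with \(\bV=\bK\cap\bValpha\), and above all that the value group is unchanged, are not proved: you replace them by a classical sketch and then write that the constructive version is ``handled along the lines of~\cite{KL00}'', which is precisely the part one is being asked to supply. Moreover the classical substitutes offered are themselves problematic in this context. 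The factorisation \(f=pq\) over \(\bK\) with \(p\) the minimal polynomial of the distinguished root is not available constructively (as you note), but even classically your argument for the ``furthermore'' clause is either circular or invalid: invoking ``a henselian extension over which the factor cutting out \(\bValpha\) has degree one'' presupposes the existence and immediacy of a henselisation of \((\bK,\bV)\), while Theorem~\ref{hensel-dvf} is exactly the inductive step by which that henselisation is built and its immediacy is the content of the last sentence; and the inference ``the residue field not having grown, the value group cannot grow either'' is not a valid principle (ramified extensions with trivial residue extension abound). The identification of the image of \(\bVf\) in \(\bK\otimes_\bV\bVf\) with a valuation ring is likewise asserted, not argued, and it is needed both for \(\alpha\) to be a henselian zero in your sense and for the map to \((\bL,\bW)\) to be a morphism of extensions.

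For comparison, the paper does not construct \((\bK[\alpha],\bValpha)\) as a localisation of \(\bK\otimes_\bV\bVf\) at all. It uses the consistency of the theory of algebraically closed valuated fields (Theorem~\ref{acvf}) to compute as if the valuated algebraic closure existed, reduces the henselian zero of the Nagata polynomial to the special zero \(\beta\) of a special polynomial via Proposition~\ref{newton}, and then defines the extension directly on \(\bK[X]/\gen f\) by the uniform Newton-polygon computation of \cite[proposition~2.3]{KL00}, which decides for each \(Q\in\bK[X]\) whether \(Q(\beta)=0\) and otherwise produces \(v(Q(\beta))\) as an isolated slope, hence as an element of \(\vg\); the fact that every element of \(\bK[\alpha]\) thereby acquires an immediate description \(c(1+\mu)\) with \(c\in\bK\) and \(\mu\in\im[\bValpha]\) is what simultaneously yields discreteness of \(\bK[\alpha]\), the valuation ring \(\bValpha\), and the invariance of residue field and value group. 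To repair your route you would have to import exactly this machinery to decide zero-ness and compute values in the localised ring, at which point the detour through \(\bK\otimes_\bV\bVf\) no longer saves any work.
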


Before sketching a proof, let us recall the key theorem and the key proposition in~\cite{KL00}.

\begin{theorem}[{\cite[théorème~1]{KL00}}]\label{acvf}
  Let \(\mathbf{T}_{\mathrm{acvf}}\) be the formal theory of algebraically closed valuated fields based on the language of rings with furthermore a valuation ring membership predicate~\(\operatorname{Vr}(x)\). If \((\bK,\bV)\) is a valuated field, the formal theory~\(\mathbf{T}_{\mathrm{acvf}}(\bK,\bV)\) constructed from \(\mathbf{T}_{\mathrm{acvf}}\) by introducing as constants and relations the diagram of~\((\bK,\bV)\) is consistent.
\end{theorem}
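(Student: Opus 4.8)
The plan is to reduce consistency to the existence of a model, and then to build one explicitly. By the completeness theorem for (coherent) first-order logic, $\mathbf{T}_{\mathrm{acvf}}(\bK,\bV)$ is consistent as soon as we exhibit an algebraically closed valuated field $(\Omega,\bW)$ together with a morphism $\bK\to\Omega$ realising the diagram of~$(\bK,\bV)$, that is, an embedding with $\bV=\bK\cap\bW$ — such a morphism being automatically injective, since $\bK$ is a field and models of~$\mathbf{T}_{\mathrm{acvf}}$ are nontrivial. So everything comes down to the classical assertion that every valuated field embeds, as a valuated field, into an algebraically closed one.

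For the construction I would first take $\Omega=\overline{\bK}$, an algebraic closure of~$\bK$, and then extend the valuation by Chevalley's extension theorem applied to the subring $\bV\subseteq\Omega$ and the prime~$\im[\bV]$: this yields a valuation ring~$\bW$ of~$\Omega$ with $\bV\subseteq\bW$ and $\im[\bW]\cap\bV=\im[\bV]$. It then remains to check that $\bK\cap\bW=\bV$; the inclusion $\supseteq$ is clear, and conversely, if $x\in(\bK\cap\bW)\setminus\bV$ then $x\ne0$ and $x^{-1}\in\im[\bV]$ because $\bV$ is a valuation ring of~$\bK$, hence $x^{-1}\in\im[\bW]\cap\bV\subseteq\im[\bW]$, so that $x\notin\bW$, a contradiction. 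Thus $(\Omega,\bW)$ together with the inclusion $\bK\hookrightarrow\Omega$ is the desired model, and $\mathbf{T}_{\mathrm{acvf}}(\bK,\bV)$ is consistent.

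The main obstacle is that this argument is blatantly non-constructive: it invokes an algebraic closure, Chevalley's extension theorem (an instance of Zorn's lemma), and the completeness theorem. For a proof matching the constructive ambitions of this article I would instead argue proof-theoretically, in the style of Coste--Lombardi--Roy. The theory $\mathbf{T}_{\mathrm{acvf}}$ is coherent: its axioms — the field axioms, the ring-membership axioms for~$\operatorname{Vr}$, the dichotomy $\forall x\,(x=0\vee\operatorname{Vr}(x)\vee\operatorname{Vr}(x^{-1}))$, and, for each~$n$, the existence of a root of a monic polynomial of degree~$n$ — all have the geometric form $\forall\bar x\,(\Phi\Rightarrow\exists\bar y\,\Psi)$ with $\Phi,\Psi$ positive. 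By cut elimination for coherent logic, any proof of~$\bot$ from $\mathbf{T}_{\mathrm{acvf}}(\bK,\bV)$ unfolds into a finite dynamical tree over~$(\bK,\bV)$, whose branchings are case-splits coming from the dichotomy axiom and from lazy factorisations of the polynomials to which the root axiom is applied, and at each leaf of which the algebraic axioms alone derive $1=0$ in a finitely generated valued extension of~$(\bK,\bV)$. One then shows by elementary constructive commutative algebra that no leaf is actually contradictory: adjoining a root of a monic polynomial and extending the current valuation ring so as to decide $\operatorname{Vr}(x)$ can always be performed without collapsing the ring. This last point is where the real work lies — it is essentially a constructive form of the extension of valuations, best organised through Prüfer domains and the dynamical handling of valuation rings — and it is precisely the content of théorème~1 of~\cite{KL00}, which one may therefore simply invoke.
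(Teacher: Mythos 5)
The paper itself offers no proof of this statement: it is quoted from \cite{KL00} (théorème~1) and used as a black box, so the standard of comparison is the proof given there, which is constructive and proceeds by the dynamical method (a simultaneous-collapse argument in the style of Coste--Lombardi--Roy), not by exhibiting a model. Your first, classical argument is fine as classical mathematics --- though the direction you need is soundness rather than completeness: the existence of a model of \(\mathbf{T}_{\mathrm{acvf}}(\bK,\bV)\) precludes a derivation of absurdity --- and your verification that \(\bK\cap\bW=\bV\) from \(\bV\subseteq\bW\) and \(\im[\bW]\cap\bV=\im[\bV]\) is correct. But this route rests on an algebraic closure of an arbitrary discrete field and on Chevalley's extension theorem, i.e.\ on Zorn's lemma, so it does not prove the theorem in the sense in which the present article needs it, namely as a constructively valid licence to ``compute as if the valuated algebraic closure existed''.

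The constructive half of your proposal has a genuine gap: it is circular. After recasting \(\mathbf{T}_{\mathrm{acvf}}(\bK,\bV)\) as a coherent (dynamical) theory, the step that carries all the weight --- that no branch of the dynamical evaluation collapses, i.e.\ that one can keep adjoining roots of monic polynomials and deciding \(\operatorname{Vr}(x)\) over \((\bK,\bV)\) without ever deriving \(1=0\) --- is exactly the assertion to be proved, and you discharge it by ``simply invoking'' théorème~1 of \cite{KL00}, which is that very assertion. Your text does locate the difficulty correctly, and your sketch matches the intended strategy of \cite{KL00}, where a collapse of the algebraically closed valuated theory over \((\bK,\bV)\) is reduced to a collapse of the theory of valuated fields over \((\bK,\bV)\), which is impossible because \((\bK,\bV)\) itself decides all the atomic facts involved; but as a proof it adds nothing beyond the citation the paper already makes. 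To close the gap you would have to carry out the valuation-extension argument at the leaves --- e.g.\ a Newton-polygon or Prüfer-type analysis of how \(\operatorname{Vr}\) can consistently be decided on extensions of the form \(\bK[X]/\gen{f}\) --- rather than refer it back to the theorem itself.
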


One may therefore compute as if the valuated algebraic closure~\((\bK^{\mathrm{ac}},\bV^{\mathrm{ac}})\) of~\((\bK,\bV)\) existed.

\begin{proposition}[{\cite[proposition~2.2, Newton polygon version of Hensel's lemma]{KL00}}]\label{newton}
  Let \((\bK,\bV)\) be a valuated discrete field such that \(\bV\)~is residually discrete, and let \(g=b_0+b_{1}X+\dots+b_nX^n\)~be a polynomial in~\(\bV[X]\). Consider the \textup{Newton polygon} of~\(g\), i.e.\ the lower convex hull of the sequence of coordinates \((0,v(b_0)),\allowbreak(1,v(b_1)),\allowbreak\dots,(n,v(b_n))\) in the cartesian plane, defined formally as the subsequence in which the coordinates \((k,v(b_k))\) and~\((l,v(b_l))\) are consecutive if and only if the slope of the segment joining them is
  \begin{itemize}
  \item greater than the slopes from~\((i,v(b_i))\) to~\((k,v(b_k))\) for \(0\le i<k\),
  \item less than or equal to these slopes for \(k<i<l\),
  \item less than the slopes from~\((l,v(b_l))\) to~\((i,v(b_i))\) for \(l<i\le n\).
  \end{itemize}
  Suppose that \(k\)~is such that~\((k,v(b_k))\) and~\((k+1,v(b_{k+1}))\) are consecutive coordinates, i.e.\ that they correspond to an \textup{isolated slope} of the Newton polygon.
  \begin{figure}[!h]
    \centering
    \psset{xunit=1.9,yunit=.8,dotsize=7pt,arrowsize=3pt 2,PointNameSep=2.7em}
    \begin{pspicture}(-0.5,-3.25)(5.8,5)
      \pstGeonode[CurveType=polyline,PointName={{(0,v(b_0))},{(2,v(b_2))},{(3,v(b_3))},{(5,v(b_5))}}](0,5){p0}(2,-2){p2}(3,-3){p3}(5,0){p5}
      \pstGeonode[PointSymbol={x,x},PointName={{(1,v(b_1))},{(4,v(b_4))}}](1,3){p1}(4,1){p4}
      \psaxes[linewidth=.4pt,ticks=x,labels=x]{->}(0,0)(-.16,-3.5)(5.16,5.5)
      \pstMoveNode[PointSymbol=none,PointName=none](-2.1,0){p2}{o2}
      \pstMoveNode[PointSymbol=none,PointName=none](-3.1,0){p3}{o3}
      \pstLineAB[nodesepB=-.1,linestyle=dashed]{p2}{o2}
      \pstLineAB[nodesepB=-.1,linestyle=dashed]{p3}{o3}
      \pstLineAB[arrows=->]{o3}{o2}
      \naput{$v(\alpha)$}
    \end{pspicture}
    \caption{Newton polygon for a monic polynomial~\(g\) of degree~\(n=5\) with an isolated slope corresponding to the unique zero~\(\alpha\) of~\(g\) such that \(v(\alpha)=v(b_2)-v(b_3)\).}
    \label{fig:np}
  \end{figure}
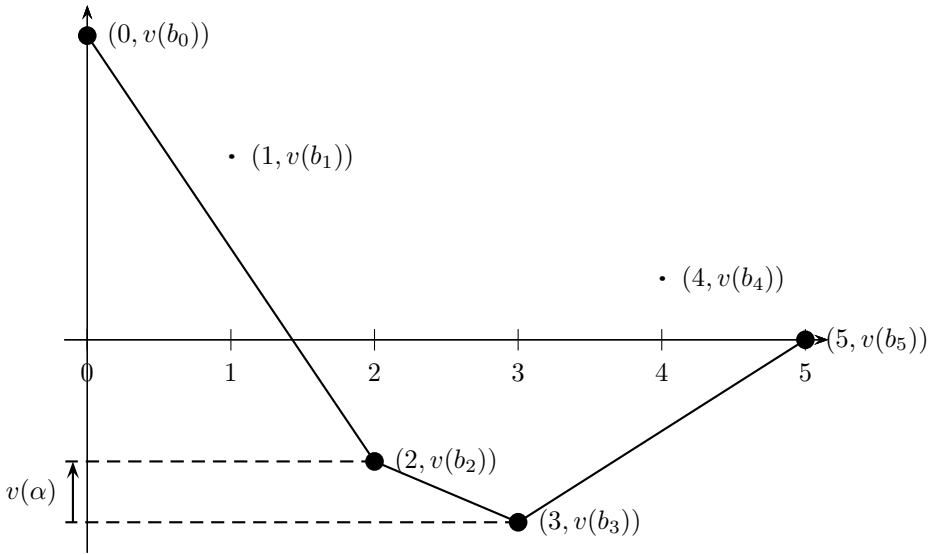
  \begin{enumerate}
  \item Then the unique zero~\(\alpha\) of~\(g\) in~\(\bV^{\mathrm{ac}}\) such that \(v(\alpha)=v(b_k)-v(b_{k+1})\) admits \(\frac{-b_k}{b_{k+1}}\) as \textup{immediate description} in the sense that \(\alpha=\frac{-b_k}{b_{k+1}}(1+\mu)\) with \(\mu\in\im[\bV^{\mathrm{ac}}]\).
  \item Furthermore, \(\nu=1+\mu\) is an henselian zero above~\(1\) for the polynomial \(h\in\bV[Y]\) defined by the formal equality \(b_k^{k+1}h(Y)=b_{k+1}^kg\bigl(\frac{-b_k}{b_{k+1}}Y\bigr)\) in which \(Y\)\kern-1.29214pt, \(b_k\), and~\(b_{k+1}\) are treated as indeterminates.
\item If we let \(f=a_0+a_{1}X+\dots+a_nX^n=h(1+X)\), then \(v(a_0)>0\) and \(v(a_1)=0\), and \(\mu\) is an henselian zero for~\(f\). If \(a_0=0\), then \(\mu=0\). If \(a_0\ne0\), we define \(t\) by the formal equality \(a_0t(X)=X^nf(\frac{-a_0}{a_1X})\) in which \(X,a_0,a_1\) are treated as indeterminates, and get that \(t\)~is a special polynomial, so that all zeros of~\(t\) except the special zero, corresponding to the zero~\(\alpha\) of~\(g\), are in \(\im[\bV^{\mathrm{ac}}]\). In a nutshell, a zero~\(\alpha\) corresponding to an isolated slope of a Newton polygon can always be explicited either as an element of~\(\bK\) or in the form \((a\beta+b)/(c\beta+d)\) with \(\beta\) the special zero of a special polynomial~\(t\), \(a,b,c,d\in\bV\), \(c\beta+d\ne0\), \(ad-bc\ne0\).
  \end{enumerate}

\end{proposition}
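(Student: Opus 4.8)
The plan is to reduce the whole statement, by a single affine change of variable together with a normalising scalar, to the elementary situation of a polynomial with a simple zero in the residue field, and then to read off the successive reformulations $h$, $f$, $t$ by routine manipulations. I would work throughout in the valuated algebraic closure~$(\bK^{\mathrm{ac}},\bV^{\mathrm{ac}})$, which is legitimate by Theorem~\ref{acvf}. The only input from valuation theory needed is that $\bV^{\mathrm{ac}}$, being a valuation ring of an algebraically closed field, is integrally closed in~$\bK^{\mathrm{ac}}$, so that every \emph{monic} polynomial of~$\bV^{\mathrm{ac}}[X]$ splits in~$\bK^{\mathrm{ac}}[X]$ into linear factors with all roots in~$\bV^{\mathrm{ac}}$, and reduction modulo~$\im[\bV^{\mathrm{ac}}]$ carries this factorisation to that of the reduced polynomial; together with the proposition on henselian and special zeros already recorded, this settles the existence and uniqueness issues.

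I would first set $h(Y)=\frac{b_{k+1}^{k}}{b_k^{k+1}}\,g\bigl(\frac{-b_k}{b_{k+1}}Y\bigr)$; clearing denominators shows that this is a polynomial identity with $b_k$ and~$b_{k+1}$ treated as indeterminates, and it is the $h$ of the statement. One computes that the coefficient of~$Y^{j}$ in~$h$ is $(-1)^{j}b_j\,b_k^{\,j-k-1}b_{k+1}^{\,k-j}$, of value $v(b_j)-L(j)$, where $L$ is the affine function with $L(k)=v(b_k)$ and $L(k+1)=v(b_{k+1})$. Since $(k,v(b_k))$ and $(k+1,v(b_{k+1}))$ are consecutive vertices of the lower convex hull, the line carrying the edge between them lies below all the points, so $v(b_j)\ge L(j)$ for every~$j$, with equality exactly at $j=k$ and $j=k+1$; because this is an \emph{isolated} slope the inequality is moreover strict for every other~$j$. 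Hence $h\in\bV[Y]$ and its reduction is $(-1)^{k}Y^{k}(1-Y)$, which has $1$ as a simple zero. I would then put $f(X)=h(1+X)$: the coefficient $a_0=f(0)=h(1)$ is the sum of the coefficients of~$h$, whose terms at $j=k$ and $j=k+1$ cancel and whose remaining terms lie in~$\im[\bV]$, so $v(a_0)>0$ or $a_0=0$; and $a_1=f'(0)=h'(1)$ satisfies $\overline{h'(1)}=(-1)^{k+1}\ne0$, so $v(a_1)=0$.

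Next I would split into cases. If $a_0=0$, then $1$ is a genuine zero of~$h$ in~$\bV$, so $\mu=0$ and $\alpha=\frac{-b_k}{b_{k+1}}\in\bK$ is a zero of~$g$ with $v(\alpha)=v(b_k)-v(b_{k+1})$. If $a_0\ne0$, I would introduce $t$ through $a_0t(X)=X^{n}f\bigl(\frac{-a_0}{a_1X}\bigr)$; one checks it is monic, its coefficient of~$X^{n-1}$ is~$-1$, and its coefficient of~$X^{m}$ for $m\le n-2$ is $(-1)^{n-m}a_{n-m}a_0^{\,n-m-1}/a_1^{\,n-m}$, of value $v(a_{n-m})+(n-m-1)v(a_0)>0$ since $n-m\ge2$ and $v(a_0)>0$; thus $t$ is a special polynomial. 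Being monic, $t$ splits in~$\bK^{\mathrm{ac}}[X]$ with all zeros in~$\bV^{\mathrm{ac}}$, and its reduction is $X^{n-1}(X-1)$, so exactly one of these zeros has residue~$1$ — its special zero~$\beta$ — and all the others lie in~$\im[\bV^{\mathrm{ac}}]$, as recorded for special polynomials. The substitution $X\mapsto\frac{-a_0}{a_1X}$ exchanges the nonzero zeros of~$f$ with the zeros of~$t$, so $\mu:=\frac{-a_0}{a_1\beta}$ is a zero of~$f$ with $v(\mu)=v(a_0)>0$; since $\overline{f(0)}=0$ and $\overline{f'(0)}=\bar a_1\ne0$, this $\mu$ is the henselian zero for~$f$, and factoring $a_0=-\mu(a_1+a_2\mu+\dots+a_n\mu^{n-1})$ out of $f(\mu)=0$ rewrites $\beta=(a_1+a_2\mu+\dots+a_n\mu^{n-1})/a_1$, which indeed has residue~$1$, so $\beta$ is the special zero corresponding to~$\mu$. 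In both cases $\nu=1+\mu$ is an henselian zero above~$1$ for~$h$, $\mu$ is an henselian zero for~$f$, and $\alpha=\frac{-b_k}{b_{k+1}}(1+\mu)$ is a zero of~$g$ with $v(\alpha)=v(b_k)-v(b_{k+1})$ and $\mu\in\im[\bV^{\mathrm{ac}}]$; when $a_0\ne0$, substituting $\mu=\frac{-a_0}{a_1\beta}$ rewrites it as $\alpha=\bigl((-b_ka_1)\beta+b_ka_0\bigr)/\bigl((b_{k+1}a_1)\beta\bigr)$, the announced form $(a\beta+b)/(c\beta+d)$ with $a,b,c,d\in\bV$, $c\beta+d=b_{k+1}a_1\beta\ne0$ and $ad-bc=-b_kb_{k+1}a_0a_1\ne0$. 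For the uniqueness of~$\alpha$: if $\alpha'$ is a zero of~$g$ with $v(\alpha')=v(b_k)-v(b_{k+1})$, then $\nu'=\frac{-b_{k+1}}{b_k}\alpha'$ is a zero of~$h$ of value~$0$, so $\overline{\nu'}$ is a nonzero zero of $(-1)^{k}Y^{k}(1-Y)$ and hence equals~$1$; thus $\mu'=\nu'-1\in\im[\bV^{\mathrm{ac}}]$ is a zero of~$f$, so $\mu'=\mu$ by uniqueness of the henselian zero, whence $\alpha'=\alpha$.

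The manipulations are elementary; the one genuinely load-bearing step, and the place where the hypothesis really enters, is the passage from ``$(k,v(b_k))$ and $(k+1,v(b_{k+1}))$ form an isolated slope of the Newton polygon of~$g$'' to ``after the normalisation the coefficients of~$Y^{k}$ and~$Y^{k+1}$ of~$h$ become units while all the others become non-units'', i.e.\ that the reduced~$h$ is exactly $(-1)^{k}Y^{k}(1-Y)$, so that $1$ is a simple residual zero. Everything afterwards is bookkeeping with the three formal equalities $h,f,t$ — each an identity of rational functions, valid once denominators are cleared — and with the already recorded facts about henselian and special zeros.
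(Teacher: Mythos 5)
Your argument is correct; but note that there is no proof of Proposition~\ref{newton} in the paper to compare it with: the proposition is quoted from \cite{KL00} (proposition~2.2) and used as a black box in the sketch of Theorem~\ref{hensel-dvf}, and what you have done is reconstruct a proof along the very route that the statement itself encodes, namely the chain \(g\mapsto h\mapsto f\mapsto t\). The two load-bearing points are handled correctly: (i) the formal definition of consecutive coordinates gives \(v(b_j)>L(j)\) strictly for every \(j\ne k,k+1\) (the inequalities in the first and third bullets are strict, and there is no \(i\) with \(k<i<k+1\)), whence \(h\in\bV[Y]\) with reduction \((-1)^kY^k(1-Y)\); (ii) existence of the special zero \(\beta\) of the monic special polynomial \(t\) in \(\bV^{\mathrm{ac}}\) via splitting over \(\bK^{\mathrm{ac}}\), integral closedness of \(\bV^{\mathrm{ac}}\) and reduction of the factorisation, while uniqueness of \(\beta\), of \(\mu\), and hence of \(\alpha\) comes from the recorded proposition on henselian and special zeros, which is how you close the uniqueness claim of item~1. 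Your appeal to \(\bV^{\mathrm{ac}}\) is licensed by Theorem~\ref{acvf} exactly as in the paper's own sketch, provided the whole computation is read inside the formal theory \(\mathbf{T}_{\mathrm{acvf}}(\bK,\bV)\); this is unproblematic here since the only effective case distinction you make, \(a_0=0\) or \(a_0\ne0\), takes place in the discrete field \(\bK\). Two small points are left implicit and could be stated: \(b_k,b_{k+1}\ne0\) because \((k,v(b_k))\) and \((k+1,v(b_{k+1}))\) are vertices of the polygon, so the substitutions make sense; and \(v(a_1)=0\) makes \(a_1\) invertible in \(\bV\), so the formal equality defining \(t\) really yields \(t\in\bV[X]\) after the valuation estimate on its coefficients.
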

\begin{proof}[\textsl{Sketch of the proof of Theorem~\ref{hensel-dvf} in \cite{KL00}}]
  As stated in Theorem~\ref{acvf}, the formal theory \(\mathbf{T}_{\mathrm{acvf}}(\bK,\bV)\) is consistent. One can therefore use the Newton polygon with hypothetically added zeros of polynomials in~\(\bV[X]\) as follows.

  As shown in Proposition~\ref{newton}, to add the henselian zero for the Nagata polynomial~\(f\) is equivalent to adding the special zero~\(\beta\) for a special polynomial~\(t\).
  
  The extension~\((\bK[\alpha],\bValpha)\) is completely determined through Newton polygons because they provide a consistent answer to the two following questions for any polynomial~\(Q\in\bK[X]\). (1)~Is \(Q(\beta)=0\)? (2)~If \(Q(\beta)\ne0\), which value the valuation on~\(\bK[\alpha]\) should assign to~\(Q(\beta)\) in the value group? In fact, \cite[proposition~2.3]{KL00} shows how to compute in a uniform way these answers, and Proposition~\ref{newton} provides the opposite of this value as an isolated slope of a Newton polygon. The last assertion holds because every element of~\(\bK[\alpha]\) has an immediate description.
\end{proof}
Note that the morphism \(\pi_\alpha\colon\bK[x]\to\bK[\alpha]\), where \(\bK[x]=\bK[X]/\gen f\), is in general not injective. Putting Theorems~\ref{hensel-rdlr} and~\ref{hensel-dvf} together yields that there is a unique local morphism \(\bVf\to\bValpha\). 

\section{The theorem of de Felipe and Teissier}

Let \((\bR,\im[\bR])\) be a residually discrete local ring with a \textsl{minimal valuation}, i.e.\ a map \(v\colon\bR\to\vgi\) with \(\vg\) a linearly ordered discrete abelian group and \(\vgi=\vg\cup\{\infty\}\) such that
\begin{itemize}
\item \(v(a)\ge0\),
\item \(v(ab)=v(a)+v(b)\),
\item \(v(a)=0\) if and only if \(a\in\bRtimes\), \(v(a)>0\) if and only if \(a\in\im[\bR]\),
\item \(v(a+b)\ge\min(v(a),v(b))\),
\item \(v(a)<\infty\) or there is \(b\in\bR\) with \(v(b)<\infty\) and \(ba\) nilpotent (so that \(v(a)=\infty\)).
\end{itemize}
The last condition is the \textsl{minimality condition}; it says that \(\fp=\{\,a\in\bR:v(a)=\infty\}\) is a minimal prime ideal because a ring is a zero-dimensional local ring if and only if each of its elements is invertible or nilpotent (\cite[Lemma~IV.8.4]{CACM}).  

Let \(\bK\) be the field of fractions of \(\bR/\fp\).  The minimal valuation~\(v\) defines a valuation on~\(\bK\), still denoted by~\(v\), such that the morphism \(\bR/\fp\to\bV\) to the valuation ring~\(\bV=\{\,x\in\bK:v(x)\ge0\}\) is local. Let \(\theta\) be the composition \(\bR\to\bR/\fp\to\bK\) and let \(\bS=\bR\setminus\fp=\{\,c\in\bR:\theta(c)\ne0\,\}\). 

Let \(f\in\bR[X]\) be a Nagata polynomial. Let \(\bR[x]=\bR[X]/\gen f\) and \(\bR_f\) be as in Theorem~\ref{hensel-rdlr}, and consider \(\pi_f\colon\bR[x]\to\bR_f\).

Considering \(f\) as a Nagata polynomial in~\(\bV[X]\), let \((\bK[\alpha],\bValpha)\) be as in Theorem~\ref{hensel-dvf}.
Consider \(\pi_\alpha\colon\bK[x]\to\bK[\alpha]\), where \(\bK[x]=\bK[X]/\gen f\).

Let \(\theta[x]\) be the morphism \(\bR[x]\to\bK[x]\) corresponding to~\(\theta\).
Let \(\theta_f\) be the unique local morphism of local \(\bR\)-algebras such that \(\theta_f\circ\pi_f=\pi_\alpha\circ\theta[x]\).
  \[  \xymatrix{
    \bR\ar[d]\ar@<-1.3ex>@/_/[ddd]_\theta\ar[r]&\bR[x]\ar@/_/[ddd]_{\theta[x]}\ar[r]^{\pi_f}&\bR_f\ar[dd]\ar@<-.5ex>@/_/[ddd]_{\theta_f}\\
    \bR/\fp\ar[d]&&\\
    \bV\ar[d]\ar[rr]&&\bValpha\ar[d]\\
    \bK\ar[r]&\bK[x]\ar[r]^{\pi_\alpha}&\bK[\alpha]}
  \]

\begin{theorem}\label{th-step}
  The kernel of~\(\theta_f\) is a minimal prime ideal of~\(\bR_f\).
\end{theorem}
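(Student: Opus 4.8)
The plan is to describe $\ker\theta_f$ explicitly, reduce the theorem to a statement inside the finite free $\bR$-algebra $\bR[x]=\bR[X]/\gen f$, and then prove that statement by playing three facts against one another: $\bK[x]=\bK[X]/\gen f$ is a finite-dimensional $\bK$-vector space; $\bK$ is the field of fractions of the domain $\bR/\fp$; and $\fp$ is a minimal prime of~$\bR$. First I would compute the kernel. Every element of~$\bR_f$ can be written $\pi_f(g(x))\,u$ with $g(x)\in\bR[x]$ and $u$ a unit of~$\bR_f$, since $\bR_f$ is a localisation of~$\bR[x]$; by the defining relation $\theta_f\circ\pi_f=\pi_\alpha\circ\theta[x]$ and the fact that $u$ maps to a unit of the domain~$\bValpha$, one has $\theta_f(\pi_f(g(x))\,u)=0$ if and only if $\pi_\alpha(\theta[x](g(x)))=0$. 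Hence $\ker\theta_f$ is a prime ideal (the codomain being a domain), and $a\in\ker\theta_f$ precisely when $a=\pi_f(g(x))\,u$ with $u$ a unit of~$\bR_f$ and $g(x)\in\fp[0]:=\ker(\pi_\alpha\circ\theta[x])$, a prime ideal of~$\bR[x]$. In view of the identity $\pi_f(s(x))\cdot(\pi_f(g(x))\,u)^{N}=\pi_f(s(x)g(x)^{N})\,u^{N}$ and the characterisation of zero-dimensional local rings recalled above, it then suffices to show that $\fp[0]$ is a minimal prime of~$\bR[x]$, i.e.\ that \emph{for every $g(x)\in\fp[0]$ there exist $s(x)\in\bR[x]\setminus\fp[0]$ and an integer $N$ with $s(x)g(x)^{N}=0$}.

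To establish this, fix $g(x)\in\fp[0]$ and set $\gamma:=\theta[x](g(x))\in\bK[x]$, so that $\pi_\alpha(\gamma)=0$. Because $\bK[x]$ has finite $\bK$-dimension~$\deg f$, the descending chain of principal ideals $\gamma^{k}\bK[x]$ stabilises; taking $N$ with $\gamma^{N}\bK[x]=\gamma^{N+1}\bK[x]$ and $b\in\bK[x]$ with $\gamma^{N}=\gamma^{N+1}b$ yields $(1-\gamma b)\gamma^{N}=0$, and $\pi_\alpha(1-\gamma b)=1$ because $\pi_\alpha(\gamma)=0$. Since $\bK$ is the field of fractions of $\bR/\fp$, I would clear denominators, writing $b=\theta[x](b_{1}(x))/\theta(d)$ with $b_{1}(x)\in\bR[x]$ and $d\in\bR\setminus\fp$; multiplying $(1-\gamma b)\gamma^{N}=0$ by~$\theta(d)$ and using that $\theta[x]$ is a ring morphism with $\gamma=\theta[x](g(x))$ shows $\theta[x]((d-g(x)b_{1}(x))\,g(x)^{N})=0$, i.e.\ $P(x):=(d-g(x)b_{1}(x))\,g(x)^{N}\in\ker\theta[x]$. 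And $\ker\theta[x]=\fp\bR[x]$: it is the kernel of coefficientwise reduction $\bR[x]\to(\bR/\fp)[x]$, and $(\bR/\fp)[x]\hookrightarrow\bK[x]$ because $\bR/\fp$ is a domain and~$f$ is monic.

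Next, write $P(x)=\sum_{j=1}^{r}p_{j}e_{j}(x)$ with $p_{j}\in\fp$ and $e_{j}(x)\in\bR[x]$. Since $\fp$ is a minimal prime, each $p_{j}$ has $v(p_{j})=\infty$, so the minimality condition yields $c_{j}\in\bR\setminus\fp$ with $c_{j}p_{j}$ nilpotent; as $\bR\setminus\fp$ is multiplicative, $c:=c_{1}\cdots c_{r}$ makes every $cp_{j}$ nilpotent, hence $cP(x)=\sum_{j}(cp_{j})e_{j}(x)$ is a sum of nilpotent elements of~$\bR[x]$ and is therefore nilpotent: $c^{M}P(x)^{M}=0$ for some~$M$. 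Using $P(x)^{M}=(d-g(x)b_{1}(x))^{M}g(x)^{NM}$ and setting $s(x):=c^{M}(d-g(x)b_{1}(x))^{M}$ and $N':=NM$, one obtains $s(x)g(x)^{N'}=0$; moreover $s(x)\notin\fp[0]$, because $\pi_\alpha(\theta[x](s(x)))=\theta(c)^{M}\theta(d)^{M}\neq0$ in the discrete field~$\bK[\alpha]$, using that $c,d\notin\fp$ and $\pi_\alpha(\theta[x](g(x)))=0$. Thus $\fp[0]$ is minimal, and the theorem follows.

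The step I expect to be the main obstacle is exactly the junction of the last two: manufacturing a witness of nilpotence in the residue-level field~$\bK[\alpha]$ — equivalently, in the artinian ring~$\bK[x]$ — and then transporting it down to an honest annihilator in~$\bR[x]$ while simultaneously clearing $\bK$-denominators and absorbing the inevitable $\fp$-error, all the while keeping the annihilator outside~$\fp[0]$. This is the three-layered interaction anticipated in the introduction — the reason henselising~$\bR$ need not return a domain, and the reason one argues step by step — and the two devices that make it work are the multiplicativity of~$\bR\setminus\fp$ and the description of zero-dimensional local rings as those in which every element is invertible or nilpotent. Every choice above — the stabilisation index, the element~$b$, the common multiplier~$c$, the exponent~$M$ — is effective given the constructive content of Theorems~\ref{hensel-rdlr} and~\ref{hensel-dvf}, so the proof stays within the constructive framework of the article.
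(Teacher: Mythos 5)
Your proof is correct, and it follows the paper's overall strategy --- reduce to elements \(\pi_f(q(x))\) of the localisation \(\bR_f\), then use the minimality of \(\fp\) to turn coefficients lying in \(\fp\) into nilpotents after multiplication by elements of \(\bS\), so as to exhibit, for \(\gamma=\pi_f(q(x))\in\ker\theta_f\), an element \(\zeta\in \bS_f\) annihilating a power of \(\gamma\) --- but the central device is different. The paper computes the characteristic polynomial \(g(T)\) of \(q(x)\) in the free \(\bR\)-algebra \(\bR[x]\), so that the relation \(g(\gamma)=0\) has coefficients already in \(\bR\); splitting off the power \(T^k\) dividing the reduced polynomial \(g_1\) over \(\bK\) then yields the annihilator \(\zeta=b\,h(\gamma)\) directly inside \(\bR_f\), with no need to describe \(\ker\theta[x]\) nor to clear denominators. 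You instead argue at the residue level in the finite-dimensional algebra \(\bK[x]\): stabilisation of the chain of ideals generated by the powers of \(\bar q=\theta[x](q(x))\) (a Fitting-type argument, constructive because the \(\bK\)-dimensions of these subspaces are computable, \(\bK\) being discrete, so a repetition occurs within \(\deg f\) steps) gives a relation \((1-b\bar q)\bar q^{\,N}=0\), which you then descend to \(\bR[x]\) by clearing denominators and by the identification \(\ker\theta[x]=\fp\bR[x]\) (freeness of \(\bR[x]\) over \(\bR\) since \(f\) is monic, injectivity of \(\bR/\fp\to\bK\)), before applying minimality coefficientwise to the resulting element of \(\fp\bR[x]\). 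Both routes are elementary and constructive, and your bookkeeping (the cofactor \(c^M(d-g(x)b_1(x))^M\) staying outside \(\fp[0]\) because its image in the discrete field \(\bK[\alpha]\) is \(\theta(c)^M\theta(d)^M\ne0\)) is sound; yours makes explicit the intermediate detachable prime \(\fp[0]=\ker(\pi_\alpha\circ\theta[x])\) of \(\bR[x]\) and proves its minimality, at the cost of the denominator-clearing and kernel-identification steps, which the paper's Cayley--Hamilton relation renders unnecessary.
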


\begin{proof}
  The kernel~\(\fp[f]\) of~\(\theta_f\) is a detachable prime ideal because~\(\bK[\alpha]\) is a discrete field. Let \(\bS_f=\bR_f\setminus\fp[f]=\{\,\gamma\in\bR_f:\theta_f(\gamma)\ne0\,\}\). We have to prove, for~\(\gamma\in\bR_f\), that \(\gamma\in \bS_f\) or there is a \(\zeta\in \bS_f\) such that \(\zeta\gamma\) is nilpotent. As \(\bR_f\) is a localisation of \(\bR[x]\), we have to prove this only for \(\gamma=\pi_f(q(x))\) with \(q\in\bR[X]\). Let \(q_1=\theta[X](q)\) and \(\delta=\pi_\alpha(q_1(x))=\theta_f(\gamma)\).
  \[  \xymatrix{
    \bR\ar[d]_\theta\ar[r]&\bR[x]\ar[d]_{\theta[x]}\ar[r]^{\pi_f}&\bR_f\ar[d]_{\theta_f}\\
    \bK\ar[r]&\bK[x]\ar[r]^{\pi_\alpha}&\bK[\alpha]}\qquad
  \xymatrix{
    q(x)\ar@{|->}[d]_{\theta[x]}\ar@{|->}[r]^{\pi_f}&\gamma\ar@{|->}[d]_{\theta_f}\\
    q_1(x)\ar@{|->}[r]^{\pi_\alpha}&\delta}
  \]
  Compute the characteristic polynomial~\(g(T)\) of~\(q(x)\) in~\(\bR[x]\). The same computation yields for~\(q_1(x)\) in~\(\bK[x]\) the characteristic polynomial~\(g_1=\theta[T](g)\): one has
  \[
    \begin{aligned}
      g(T)&=T^n+a_{n-1}T^{n-1}+\dots+a_0&\text{and}&&g(q)&=0\text;\\
      g_1(T)&=T^n+\theta(a_{n-1})T^{n-1}+\dots+\theta(a_0)&\text{and}&&g_1(q_1)&=0\text.\\
    \end{aligned}
  \]
  In particular, \(g(\gamma)=0\) in~\(\bR_f\) and \(g_1(\delta)=0\) in~\(\bK[\alpha]\).

  As \(\bK\)~is discrete, we have either \(g_1(0)=0\) or \(g_1(0)\ne0\) in~\(\bK\).
  \begin{itemize}
  \item If \(g_1(0)\ne0\), then, as \(\delta=0\implies g_1(0)=g_1(\delta)=0\), we have \(\delta\ne0\); as \(\delta=\theta_f(\gamma)\), \(\gamma\in \bS_f\), so we are done.
  \item If \(g_1(0)=0\), let us write \(g_1(T)=T^kh_1(T)\) with \(h_1(0)\ne0\) and \(0<k\le n\), so that \(\theta(a_j)=0\) for \(0\le j<k\). By minimality of~\(\fp\), \(\theta(a_j)\ne0\) being impossible, there is a \(b_j\) such that \(b_ja_j\) is nilpotent. Let \(b=b_0\cdots b_{k-1}\in \bS\); if we write \(g(T)=T^kh(T)+a(T)\), there is~\(N\) such that \((ba(T))^N=0\); note that \(\theta[T](h)=h_1\).

    As \(g_1(\delta)=\delta^kh_1(\delta)=0\) and \(\bK[\alpha]\) is discrete, we have either \(\delta=0\) or \(h_1(\delta)=0\).
    \begin{itemize}
    \item If \(h_1(\delta)=0\), then, as \(\delta=0\implies h_1(0)=h_1(\delta)=0\), we have \(\delta\ne0\), i.e.\ \(\gamma\in \bS_f\), so we are done.
    \item If \(\delta=0\), let \(\epsilon=h(\gamma)\): then \(\theta_f(\epsilon)=h_1(\delta)=h_1(0)\ne0\), so that \(\epsilon\in \bS_f\). As \(g(\gamma)=0\), \(bg(\gamma)=b\gamma^k\epsilon+ba(\gamma)=0\); thus \((b\gamma^k\epsilon)^N=(-ba(\gamma))^N=0\). For \(\zeta=b\epsilon\) we have therefore \(\zeta\in \bS_f\) and \(\zeta\gamma\) nilpotent, so we are done.
    \end{itemize}
  \end{itemize}
  Thus we are done in all cases.
\end{proof}

\begin{remark}\label{rem-reduit}
  For the purpose of proving the result of de Felipe and Teissier, it would have sufficed to consider the case where \(\bR\) is reduced, for which the proof does not resort to nilpotents, because this property is inherited by~\(\bR_f\).
\end{remark}

\begin{remark}\label{rem-single}
  As every extension \((\bK[\alpha_1,\dots,\alpha_n],\bV_{\!\alpha_1,\dots,\alpha_n})\) obtained by adding step by step zeros of Nagata polynomials may be obtained in a single step, it would even have sufficed to consider the case where \(\bR\) is a domain. However, the proof that every extension may be obtained in a single step is quite involved; a constructive proof using the multivariate Hensel lemma is given in~\cite{ACL2014}.
\end{remark}

\end{document}